\def\amsbb{\use@mathgroup \M@U \symAMSb}
\newtheorem{theorem}{Theorem}[section]
\newtheorem{proposition}{Proposition}[section]
\newtheorem{lemma}[theorem]{Lemma}
\theoremstyle{definition}
\newtheorem{definition}{Definition}[section]
\theoremstyle{remark}
\newtheorem*{remark}{Remark}
\DeclareMathOperator*{\argmax}{arg\,max}
\newcommand{\R}{\amsbb{R}}
\newcommand{\E}{\amsbb{E}}
\title[On the Effect of Alpha Decay and Transaction Costs]{On the Effect of Alpha Decay and Transaction Costs on the Multi-period Optimal Trading Strategy}
\begin{document}

\author{Chutian Ma}
       \address[Chutian Ma]{Department of Mathematics, Johns Hopkins University}
       \email{cma27@jhu.edu}

\author{Paul Smith}
       \address[Paul Smith]{Department of Mathematics, University of North
       Carolina-Chapel Hill}
       \email{pasmit@unc.edu}

\begin{abstract}
    We consider the multi-period portfolio optimization problem with a single
    asset that can be held long or short. Due to the presence of transaction
    costs, maximizing the immediate reward at each period may prove detrimental,
    as frequent trading results in consistent negative cash outflows. To
    simulate alpha decay, we consider a case where not only the present value of
    a signal, but also past values, have predictive power. We formulate the
    problem as an infinite horizon Markov Decision Process and seek to
    characterize the optimal policy that realizes the maximum average expected
    reward. We propose a variant of the standard value iteration algorithm for
    computing the optimal policy. Establishing convergence in our setting is
    nontrivial, and we provide a rigorous proof. Addtionally, we compute a
    first-order approximation and asymptotics of the optimal policy with small
    transaction costs.
\end{abstract}

\maketitle

\tableofcontents

\section{Introduction}
\subsection{Multi-period portfolio selection}
This paper extends the research presented in our previous work
\cite{ma2024optimization}. We study an optimal control problem arising from
multi-period trading in the presence of transaction costs. The core of the
problem is that the system generates new information at each stage, which has
predictive power with respect to the target. The influence of the old
information decays over time, which is also known as alpha decay, until it
completely vanishes after a finite lifespan. The main contributions of the paper
are the following: (i) A variant of the standard value iteration algorithm is
proposed to solve the dynamic programming problem, which does not require
solving for the maximum/minimum at each period; (ii) A first-order approximation
of the optimal policy obtained by the value iteration, (iii) A numerical
simulation that compares the optimal policy with a baseline policy. 

We briefly review the history of research on single and multi-period trading.
Single-period trading refers to a simplified trading framework where decisions
are made over a single time interval, with the goal of maximizing returns and/or
minimizing risk during that period. Markowitz was the first to formulate the
portfolio selection problem as an optimization problem in \cite{markowitz1952},
where the portfolio manager balances the return and risk. Various results have
been established to extend the analysis into multi-period settings (e.g., see
\cite{hakansson1971multi}, \cite{merton1969lifetime},
\cite{constantinides1979multiperiod}, \cite{calafiore2008multi},
\cite{calafiore2009affine}, \cite{samuelson1969lifetime}). 

The effect of transaction costs were mostly ignored in earlier research. Magill
and Constantinides were the first to study the portfolio selection problem in
the presence of transaction costs in \cite{magill1976portfolio}. Common types of
transaction costs include bid-ask spread, taker or maker fees, market impact,
price slippage, etc, the effects of which were accounted for in
\cite{boyd2017multi} and \cite{nystrup2019multi}, for instance. The presence of
transaction costs greatly affects the return of multi-period trading. Thus, the
presence of transaction costs complicates the optimization problem, and the
optimal solution may involve trading less aggressively. For example, if the fees
are high relative to the strength of the signal, the minor expected reward may
not justify the costs incurred during this transaction. In this case, doing
nothing may be the best ``action" to take. The strategy described above involves
a so-called ``no-trade zone": the manager will only trade if the signal strength
exceeds the boundaries of the no-trade zone. On the other hand, with low
transaction fees, the manager may make decisions without much regard to their
future impact. In the limiting case as the cost approaches zero, we recover the
classic problem in \cite{merton1992continuous}.

Various studies have been conducted to quantify the effect of transaction costs.
The exact form is often hard to obtain. An alternative direction is to derive an
approximation in the case of small transaction costs, which was explored by
Constantinides in \cite{constantinides1986capital} and Bernard et al. in
\cite{dumas1991exact}. They studied a two-asset model with a riskless asset,
e.g. cash, and a risky one. The rate of price movement of the latter is assumed
to be constant plus a white noise. Approximate and exact optimal portfolio
weights were derived for some range of transaction cost ratio. For references on
this direction of research, see also \cite{shreve1994optimal},
\cite{soner2013homogenization},
\cite{possamai2015homogenization},\cite{grinold2006dynamic}. 

In this paper, we pursue this direction further. But rather than assuming entire
randomness of the price movement, we suppose that the manager is in possession
of a series of trading signals that is correlated to some extent with the price
movement. We aim to study how the decision making process based on the signals
is affected by the transaction costs. We show that the optimal policy can be
expressed as a no-trade zone. The trader will only trade when the signal
strength goes beyond the zone. Additionally, we derive a first-order
approximation of the boundary of the no-trade zone and validate this policy with
numerical simulations.

\subsection{Our model}
We define a simplified model which features the predictive power of both the
current and historical signals with respect to the target. In addition, we
assume that the predictive power of the old signals decays over time and
completely vanishes after a finite lifespan. We also take into account the
effect of transaction costs. Based on this discussion, our model has two key
inputs:
\begin{enumerate}[label=\textbullet]
    \item A correlation structure between signals and the target which simulates
    the alpha decay of old signals and the generation of new signals
    \item Transaction costs measured relative to the signal target correlation
    strength
\end{enumerate}

To make it precise, we denote the target, i.e. the price movement, by a sequence
of random variables ${Y_t}$. We denote the signals by $X_t$. At each time period
$t$, we assume that $Y_t$ is correlated with the signal from the current period
up to $k$ periods backward in time (lags). In other words, we have the following
relation with constants $\rho_j$:
\begin{equation}\label{eq correlation model}
    Y_t = \rho_0 X_t + \rho_1 X_{t-1} + ... + \rho_{k-1} X_{t-k+1} +
        \varepsilon_t.
\end{equation}
where $\varepsilon_t$ denotes IID standard Gaussian noise (mean zero, variance
one). The cross-correlation constants $\rho_j$ reflect the strength of the
predictive power with respect to the target of the lags of the signal. In
practice, one would expect the current signal to be dominant, i.e., $\rho_0 \gg
\rho_j, j \geq 1$.

For simplicity, we will only consider the case where $k = 2$ in this paper,
i.e., only the current and previous signal are relevant to forecasting. 

For transaction costs, proportional transaction costs are commonly assumed in
research. Due to our constraint on the position, we assume that the manager pays
a fixed fee whenever he switches position.

\subsection{Goal} 
With the above model to capture alpha decay and transaction costs, we formulate
the problem using a Markov Decision Process (MDP). Our goal then is to analyze
the properties of the optimal policy. Next, we aim to prove the convergence of
the algorithm involved. Finally, we provide an approximation of the optimal
policy and numerically validate it.

Note that we aim to show mathematically that the generic value iteration
algorithm converges. However, we do not propose a specific algorithm for
efficient computations. See \cite{bertsekas2011dynamic},
\cite{bertsekas2012dynamic} for examples of implementations with discrete state
spaces, and \cite{schal1993average}, \cite{feinberg2012average} for discussion
on the continuous state space cases.

\subsection{Outline}
We start in Section \ref{sec:prelim} by introducing some preliminary results in
MDPs and dynamic programming theory. In particular, we recall the connection
between the solvability of the dynamic programming problem and that of the
resulting Bellman equation. We also record some basic mathematical concepts
required to show the convergence of the algorithm. 

% TODO: Is this the right section reference?
Next, in Section \ref{sec:numer}, we describe the multi-period portfolio
selection problem with our model and the MDP framework. We present a proof of
convergence for the value iteration algorithm arising from dynamic programming.
The main body of the proof can be found in Appendix \ref{sec:convergence
analysis}. The proof is based on contraction properties. That naturally implies
that the algorithm converges exponentially fast.

Finally, we present a numerical approximation of the optimal policy in the case
of small transaction costs. We approximate the policy up to a first-order Taylor
expansion. We conclude with a discussion of the implications of our results.

\section{Preliminaries}
\label{sec:prelim}

In this section, we review some preliminary results about Markov Decision
Processes and stochastic optimal control. Throughout this paper, $f$ will refer
to the probability density function of the standard Gaussian random variable.

\subsection{Markov Decision Process}
Recall that a Markov Decision Process (MDP) is a mathematical framework for
modeling sequential decision-making problems where the outcomes are partly under
the control of a decision-maker and partly influenced by stochastic dynamics. An
MDP is defined by the following components:
\begin{enumerate}
    \item $S$: The state space, representing all possible states of the system.
    \item $C$: The control space (or action space), representing the set of all
    possible actions or controls that can be applied in any state.
    \item $S \times C \rightarrow \mathcal{P}(S)$: The system transition
    function, which specifies the probability distribution over the next state
    given the current state and control. $\mathcal{P}(S)$ denotes the space of
    probability distributions on $S$.
\end{enumerate}

We define the components of the MDP characterizing the multi-period trading
specific to our model \eqref{eq correlation model}.
\begin{definition}[State space]
    We denote the state space by $S$ and take it to be given by 
    \begin{equation*}
        S = \{(x_0, x_1, q) | x_0, x_1 \in \R,\ q = 1\ or\ -1\}
    \end{equation*}
    where $x_i$ denotes the signal and $q$ the portfolio.
\end{definition}

We now define the control space $C$. At each stage $t$, we choose an action $u$
from $C$. 
\begin{definition}[Control space]
    The control space $C$ consists of two actions $u_+$ and $u_-$. Action $u_+$
    corresponds to going long while $u_-$ corresponds to going short.
\end{definition}

\begin{definition}[System equation]
    The system transition equation in an MDP describes how the state of the
    system evolves over time based on the current state and a given control.
    Formally, it is represented as:
    \[
        x_{t+1} = T(x_t, u_t, \omega_t)
    \]
    where $\omega_t$ is a random disturbance. In our model, the transition
    function $T$ is defined as $T : S \times C \times \R \rightarrow S$
    and takes the form
    \begin{equation}\label{eq system transition}
        (x_{t+1, 0}, x_{t+1, 1}, q_{t+1}) = (\omega_t, x_{t, 0}, u_t(q_t))
    \end{equation}
    where $\omega_t \sim \mathcal{N}(0, 1)$ is a sequence of identically
    independently distributed standard Gaussian variables. It represents the new
    information generated at the current time, which replaces in the next period
    what is in this period the most recent information. We use $u_t(q_t)$ to
    stand for the updated portfolio after exerting control $u_t$.
\end{definition}

\begin{definition}[Policy]
    We call $\pi_N = (\mu_0, \mu_1, ..., \mu_N)$ a policy for the $N$-stage
    stochastic optimal control model if $\mu_j(x_0, u_0, x_1, u_1, ... , x_j):
    S^{j+1} \times C^j \rightarrow C$. Likewise, we call $\pi = (\mu_0, \mu_1,
    ...)$ a policy for the infinite horizon stochastic optimal control problem.
    If $\mu_j$ only depends upon $x_j$ for all $j$, then we call the policy
    $\pi_N(\pi)$ a Markov policy. If $\mu_j$ depends only on $x_j$ and $x_0$,
    then we call the policy semi-Markov. If a policy $\pi=(\mu, \mu, ...)$
    consists of a single control, we say that the policy is stationary.
\end{definition}

\begin{definition}[Reward function]
    We let $c \geq 0$ denote the cost to switch the position from long to short
    or vice-versa. The one-period reward function $g$ is then defined as
    \begin{equation}\label{eq reward func}
        g(x_t, q_t, u_t) = (\rho_0 x_{t, 0} + \rho_1 x_{t, 1}) q - \delta(q_t =
            q_{t+1}) c,
    \end{equation}
    the immediate reward minus potential transaction costs, if any. We now
    define the average expected reward functional $J_\pi$ given a policy $\pi =
    (\mu_1, \mu_2, ...)$ and initial state $(x, q)$:
    \begin{equation}\label{eq avg reward infinite horizon}
        J_{\pi}(x, q) = \limsup_{N \rightarrow +\infty}
        \frac{1}{N}\E_{\omega_t}[\ \sum_{t=0}^{N} g(x_t, q_t, \mu_t(x_t, q_t))\ ]
    \end{equation}
    under the constraint \eqref{eq system transition}.
\end{definition}

\subsection{Dynamic programming solution}
Naturally, our next goal is to determine whether there exists a stationary
Markov policy $\pi$ that maximizes \eqref{eq avg reward infinite horizon} for
any initial state $(x, q)$. A vast literature on the existence of average cost
optimal solutions has been established. Arapostathis et al. surveyed much of the
early results in \cite{arapostathis1993discrete}. For MDPs with finite state and
control spaces, there exist stationary average cost optimal policies satisfying
the optimality Bellman equation, see \cite{blackwell1962discrete},
\cite{derman1962sequential}. On the other hand, any stationary policy satisfying
the optimality equation is average cost optimal, see \cite{platen1981dynkin}.
For infinite state space MDPs, Ross proved the existence of an optimal solution
for countable state spaces in \cite{ross1968non} and for Borel spaces in
\cite{ross1968arbitrary}. According to \cite{schal1993average},
\cite{feinberg2012average}, we have the following equivalent condition for an
optimal policy to exist for general average cost infinite horizon problems:
\begin{theorem}[\cite{schal1993average}, \cite{feinberg2012average}]
Let $s$ denote the current state, and $g$, $T$, and $P$ be the reward function,
system transition function, and probability distribution of the random
disturbance, respectively. If there exists a constant $\lambda$ and a measurable
function $h : S \rightarrow \R$ that satisfy the following Bellman equation for
any state s:
\begin{equation}\label{eq Bellman}
    \lambda + h(s) = \sup_{\pi = (\mu, \mu, ...)} g(s, \mu(s)) + \int h(T(s,
        \omega)) P(\omega \in dy), 
\end{equation}
then the $\pi$ that obtains the maximum is average-cost optimal and $\lambda$
equals the maximum average reward. The function $h$ is called the bias function.
\end{theorem}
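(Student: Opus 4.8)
The plan is to prove this by the classical verification (or ``martingale'') argument for average-reward dynamic programming; the hypotheses are designed to package everything needed so that the proof reduces to a telescoping computation. The first step is to observe that, because $g(s,\mu(s))$ and $\int h(T(s,\mu(s),\omega))\,P(d\omega)$ depend on the stationary control $\mu$ only through its value at the current state, the supremum in \eqref{eq Bellman} is really a pointwise supremum over controls, so the equation reads
\[
    \lambda + h(s) \;=\; \sup_{a\in C}\Big(\,g(s,a) + \int h(T(s,a,\omega))\,P(d\omega)\,\Big),
\]
attained at $a=\mu^*(s)$, where $\pi^*=(\mu^*,\mu^*,\dots)$ is the maximizing stationary policy. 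Equivalently, for every state $s$ and every control $a$ one has $g(s,a) + \int h(T(s,a,\omega))\,P(d\omega) \le \lambda + h(s)$, with equality when $a=\mu^*(s)$.

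Next I would fix an arbitrary policy $\pi=(\mu_0,\mu_1,\dots)$ and an initial state $s_0=s$, let $(s_t)_{t\ge0}$ be the induced state trajectory under \eqref{eq system transition}, set $\mathcal F_t=\sigma(s_0,\dots,s_t)$, and write $a_t$ for the $\mathcal F_t$-measurable control applied at time $t$. Applying the inequality above with $a=a_t$ and using the Markov property of the transition gives
\[
    \E\big[\,g(s_t,a_t)\,\big|\,\mathcal F_t\,\big] + \E\big[\,h(s_{t+1})\,\big|\,\mathcal F_t\,\big] \;\le\; \lambda + h(s_t).
\]
Taking expectations, summing over $t=0,1,\dots,N$, and telescoping the $h$-terms yields
\[
    \E\Big[\sum_{t=0}^{N} g(s_t,a_t)\Big] \;\le\; (N+1)\lambda + h(s) - \E\big[h(s_{N+1})\big].
\]
Dividing by $N$ and taking $\limsup_{N\to\infty}$, the term $h(s)/N\to 0$, and, under the transversality condition $\frac1N\E[h(s_{N+1})]\to 0$, we obtain $J_\pi(s)\le\lambda$. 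Since $\pi$ and $s$ are arbitrary, $\lambda$ is an upper bound for the average reward.

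Finally I would run the identical computation for $\pi^*$: since the Bellman inequality holds with equality at $a_t=\mu^*(s_t)$, every inequality above becomes an equality, so $\E\big[\sum_{t=0}^{N} g(s_t,\mu^*(s_t))\big] = (N+1)\lambda + h(s) - \E[h(s_{N+1})]$, whence $J_{\pi^*}(s)=\lambda$ exactly (again using the transversality condition). Combining the two halves, $J_{\pi^*}(s)=\lambda\ge J_\pi(s)$ for every policy $\pi$ and initial state $s$, which is precisely the assertion that $\pi^*$ is average-reward optimal and that $\lambda$ equals the optimal average reward.

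The main obstacle is the tail term $\frac1N\E[h(s_{N+1})]$: its vanishing is immediate when $h$ is bounded, but the bias functions arising in our setting need not be bounded, so one must either restrict to policies under which $\E\,|h(s_N)|$ grows sublinearly in $N$ or impose a matching growth bound on $h$ (and, throughout, one needs the expectations involved to be finite so that the conditional expectations are well defined). A secondary, more technical point is the measurable-selection issue --- the existence of a measurable $\mu^*$ attaining the supremum in \eqref{eq Bellman} --- which the statement takes as a hypothesis (``the $\pi$ that obtains the maximum''); in concrete models it is handled by standard selection theorems once the inner function is, say, upper semicontinuous in $a$ with $C$ compact, as it is in our two-action setting.
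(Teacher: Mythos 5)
The paper does not actually prove this theorem; it imports it verbatim from the cited references (Sch\"al; Feinberg et al.), so there is no in-paper argument to compare against. Your verification (telescoping) argument is the standard proof of this kind of statement and is structurally sound: the reduction of the supremum over stationary policies to a pointwise supremum over controls, the one-step inequality $g(s,a)+\int h(T(s,a,\omega))\,P(d\omega)\le\lambda+h(s)$ valid for every control, the telescoping sum giving $\E\bigl[\sum_{t=0}^{N}g(s_t,a_t)\bigr]\le(N+1)\lambda+h(s)-\E[h(s_{N+1})]$, and the matching equality along the maximizing policy are all exactly what a self-contained proof requires, and you correctly extend the inequality to history-dependent policies since it holds control-by-control.

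The one substantive point is the tail term $\tfrac1N\E[h(s_{N+1})]$, which you rightly flag as the main obstacle rather than sweeping it under the rug. Be aware that this is not merely a technicality: as stated, with $h$ only assumed measurable, the theorem is false in general (there are well-known examples where the optimality equation has unbounded solutions whose maximizer is not average-optimal), and the cited references impose additional conditions precisely to control this term. So your proof is a proof of a slightly strengthened statement (ACOE plus transversality), which is the honest reading of the theorem. In the specific model of this paper the condition does hold: the convergence analysis places $h$ in a space where $|H(x)|\le A_1\rho_0|x+\tfrac{c}{2\rho_1}|$, i.e.\ $h$ has at most linear growth in the signal variables, and the signal components of the state are i.i.d.\ standard Gaussians under every policy, so $\E|h(s_N)|$ is bounded uniformly in $N$ and $\tfrac1N\E[h(s_{N+1})]\to0$. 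If you include that observation, your argument closes the gap for the setting in which the theorem is actually used here.
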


Now we apply the theorem to our model. A state $s$ consists of the current
signal, the previous signal, and the current portfolio position $q$. The random
disturbance $\omega$ corresponds to the new signal to be generated at the next
stage. Thus, finding the optimal stationary policy and the maximum return is
equivalent to solving the following Bellman equation:
\begin{equation}\label{eq Bellman model}
    \lambda + h(x_0, x_1, q) = \sup_{\pi = (\mu, \mu, ...)} g(x_0, x_1, q, \mu)
        + \int h(y, x_0, q') f(y) dy
\end{equation}
where $q'$ is the updated portfolio position after using the control based on
policy $\mu$ and state $(x_0, x_1, q)$, and $f$ is the probability density
function of the standard Gaussian random variable.

Common approaches to solving the dynamic programming problem include the
standard value iteration algorithm or the policy iteration algorithm, see
\cite{bertsekas1998new} for example. The standard value iteration algorithm has
the form:
\begin{align*}
    & \lambda^{k+1} = \sup_{u \in C} \left[ g(0, 0, 1, u) + \int h^k(y, 0, u(q)) f(y) dy \right], \\
    & h^{k+1}(x_0, x_1, q) = \sup_{u \in C} \left[ g(x_0, x_1, q, u) + \int h^k(y, x_0, u(q)) f(y) dy \right] - \lambda^{k+1}.
\end{align*}
During each iteration, we compute the optimal control to take for every possible
state $(x_0, x_1, q) \in \R \times \R \times \{\pm 1\}$. Then we update the
value $\lambda$ and the bias function $h$ for the next iteration. We terminate
the algorithm when $|\lambda^{k+1} - \lambda^k|, \|h^{k+1} - h^{k}\| \leq
\epsilon$ for some $\epsilon > 0$ chosen in advance. The resulting $\lambda$ is
the optimal reward and the optimal policy can be derived from
\[
    \mu(x_0, x_1, q) = \argmax_{u\in C} \left[ g(x_0, x_1, q, u) + \int h(y, x_0, q'_{u}) f(y) dy \right].
\]
However, it is impractical to perform the maximizing operation for each state
when the state space is continuous, even after quantization. We now propose our
alternative approach by which we update the bias function $h$ and the policy
$\mu$ simultaneously without needing to perform any ``argmax" operation. The
intuition is that the optimal policy $\mu$ can be computed using the equation
below. Note that we will only decide to go long if $u_+$ maximizes the RHS of
\eqref{eq Bellman model} when compared with $u_-$, which is characterized by
\begin{equation}
    g(x_0, x_1, q ,u_+) + \int h(y, x_0, 1) f(y) dy \geq g(x_0, x_1, q, u_-) +
    \int h(y, x_0, -1) f(y) dy.
\end{equation}
Thus, the optimal policy must take the form:
\begin{enumerate}
    \item We go long if 
    \begin{equation}\label{eq policy long}
        x_1 \geq G(x_0, q)
    \end{equation}
    \item and otherwise go short when
    \begin{equation}\label{eq policy short}
        x_1 < G(x_0, q),
    \end{equation}
\end{enumerate}
where
\begin{equation}\label{eq G raw}
    \begin{aligned}
            G(x_0, q) = \frac{\rho_0}{\rho_1} & \left( -x_0 + 
            \frac{c}{2\rho_0}\delta(q = -1) - \frac{c}{2\rho_0} \delta(q = 1) + \right. \\
            & \quad \left. \int \frac{1}{2\rho_0} h(y, x_0, -1) f(y) dy
            - \int \frac{1}{2\rho_0} h(y, x_0, 1) f(y) dy \right).
    \end{aligned}
\end{equation}
The decision making according to the optimal policy can be viewed in Figure
\ref{fig: no-trade zone}. The upper boundary of the no-trade zone is the graph
of $G(x_0, 1)$, while the lower one is the graph of $G(x_0, -1)$. When the
signal strength $(x_0, x_1)$ is within the no-trade zone, we do not trade. When
$(x_0, x_1)$ goes beyond the upper boundary, we switch to long (if not already).
Likewise, we switch to short if $(x_0, x_1)$ goes below the lower boundary.
\begin{figure}[H]
    \centering
    \includegraphics[scale = 0.4]{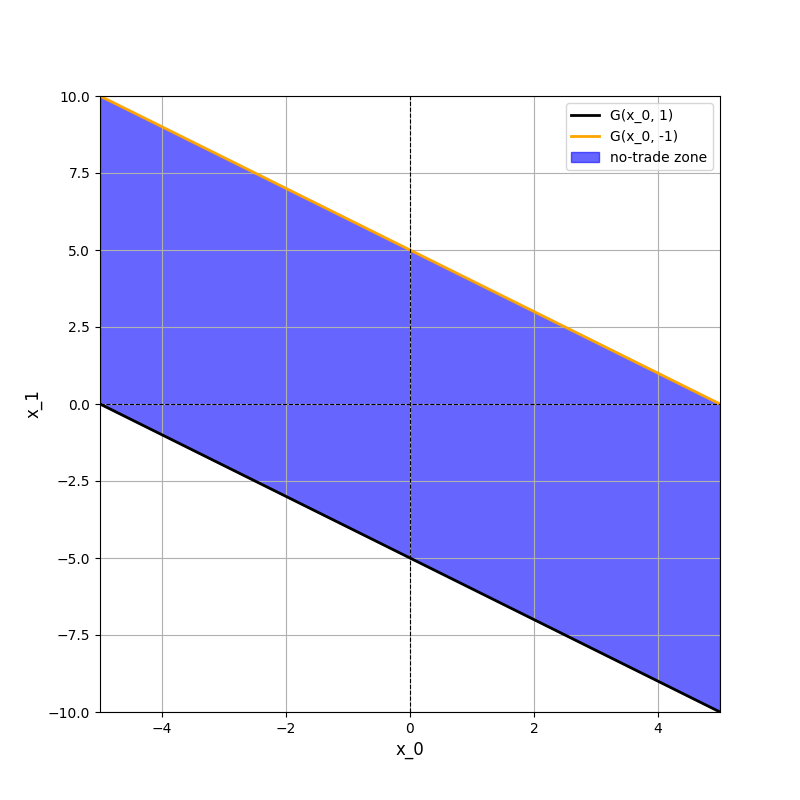}
    \caption{Optimal policy: no-trade zone}
    \label{fig: no-trade zone}
\end{figure}

When $\rho_1 = 0$, the problem is reduced to the one-dimensional case and the
optimal policy is given by a no-trade zone strategy. When $\rho_1 \neq 0$, we
can re-express $h$, as we already know the optimal policy:
\begin{equation}\label{eq h raw}
    \begin{aligned}
    & h(x_0, x_1, q) \\
    = & \left\{\begin{aligned} 
        & g(x_0, x_1, q, u_+) + \int_{-\infty}^{G^{-1}(\cdot, 1)(x_0)} h(y, x_0, 1) f(y) dy + \int_{G^{-1}(\cdot, 1)(x_0)}^{+\infty} h(y, x_0, 1) f(y) dy - \lambda \hspace{2em} \\
        & \hspace{10cm} \mathrm{if\;} \eqref{eq policy long}\ \mathrm{is\;satisfied}, \\
        & g(x_0, x_1, q, u_-) + \int_{-\infty}^{G^{-1}(\cdot, -1)(x_0)} h(y, x_0, -1) f(y) dy + \int_{G^{-1}(\cdot, -1)(x_0)}^{+\infty} h(y, x_0, -1) f(y) dy - \lambda \\
        & \hspace{10cm} \mathrm{if\;} \eqref{eq policy short}\ \mathrm{is\;satisfied}.
    \end{aligned}\right. 
    \end{aligned}
\end{equation}

Relations \eqref{eq h raw}, \eqref{eq G raw} tell us how to update the bias
function $h$ and the policy function $G$ according to their old values. Our
algorithm is formulated as below:
\begin{enumerate}\label{algo}
    \item Initialize $h^0$ and $G^0$ within some reasonable range of choices
    \item Compute the value of $h^{k+1}$ and $G^{k+1}$ based on the value of
    $h^{k}$ and $G^{k}$ using \eqref{eq h raw} and \eqref{eq G raw}
    \item Repeat step 2 until $h$ and $G$ converge to an $\epsilon$-optimal
    solution.
\end{enumerate}
\begin{theorem}
    The algorithm described above converges, where $G^{k}$ converges to the
    optimal policy.
\end{theorem}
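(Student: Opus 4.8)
The plan is to recognize the stated iteration as relative value iteration in disguise and then prove convergence by a contraction argument, the real content being the treatment of the unbounded reward and continuous state space. First I would observe that, given $h^k$, equation \eqref{eq G raw} is exactly the threshold at which the two candidate controls $u_\pm$ tie in the right-hand side of \eqref{eq Bellman model}; hence the policy defined by \eqref{eq policy long}--\eqref{eq policy short} is the maximizing control for $h^k$, and \eqref{eq h raw} records the value $\sup_{u}\big[g(x_0,x_1,q,u)+\int h^k(y,x_0,u(q))f(y)\,dy\big]-\lambda^{k+1}$. Writing $\mathcal{T}$ for this one-step optimality operator and $\mathcal{L}h=\mathcal{T}h-(\mathcal{T}h)(0,0,1)$, the algorithm is precisely $h^{k+1}=\mathcal{L}h^k$ with $\lambda^{k+1}=(\mathcal{T}h^k)(0,0,1)$, and $G^k$ is a continuous (indeed affine, through a single Gaussian integral) functional of $h^k$ by \eqref{eq G raw}. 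So it suffices to show $h^k$ converges: the limit then solves \eqref{eq Bellman model}, $G^k$ converges to the corresponding boundary, and the Sch\"al--Feinberg theorem quoted above identifies that boundary as optimal.

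Second, I would establish a priori estimates confining the iteration to a Banach space. Since $g$ is bounded apart from the linear term $(\rho_0x_0+\rho_1x_1)q$, whose coefficients lie in a fixed compact set, and since one Gaussian integration annihilates the dependence on the freshest signal ($\int y\,f(y)\,dy=0$), an induction shows each $h^k$ is, on each of the finitely many regions cut out by $G^k$, affine in $(x_0,x_1)$ with uniformly bounded slopes plus a uniformly bounded remainder; in particular $|h^k(x_0,x_1,q)|\le A(1+|x_0|+|x_1|)$ for a constant $A=A(\rho_0,\rho_1,c)$. Crucially, the unbounded part of $h^k$ is \emph{universal} --- dictated by $g$ and the transition law, not by the current policy --- so it cancels in differences: $\sup_s|h^k_1(s)-h^k_2(s)|$ stays bounded whenever the initializations lie in the (to be specified, $\mathcal{L}$-invariant) class of admissible bias functions. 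This reduces everything to the sup norm on an affine slice of the space of functions of at most linear growth, equivalently to the span seminorm there, on which $\mathcal{L}$ is well defined because $\mathcal{T}$ commutes with adding constants.

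Third --- and this is the heart of the matter --- I would prove that $\mathcal{T}^2$, hence $\mathcal{L}^2$, is a strict contraction in that seminorm. The key structural fact is that the signal dynamics, while degenerate, are maximally mixing: because $x_{t+1,0}=\omega_t$ is fresh noise independent of the current state, after two transitions the law of the signal pair $(x_{t+2,0},x_{t+2,1})$ is $\mathcal{N}(0,1)\otimes\mathcal{N}(0,1)$ regardless of the starting state, the only persistent coordinate being the binary portfolio $q$. This is a Doeblin-type minorization with an explicit minorizing measure, so the two-step continuation value depends on $h$ only through two real numbers (its conditional averages over $q=+1$ and $q=-1$). Combining this with the monotonicity of $\mathcal{T}$, the a priori bound of the previous step to control the linear parts, and the fact that the $c\ge0$ switching penalty together with the full-support Gaussian kernel keeps the induced two-dimensional map from being an isometry, I would extract an explicit modulus $\gamma<1$ with $\|\mathcal{L}^2h_1-\mathcal{L}^2h_2\|\le\gamma\|h_1-h_2\|$.

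Finally, the Banach fixed-point theorem yields a unique admissible fixed point $(h^\infty,\lambda^\infty)$ of $\mathcal{L}$, exponential convergence $\|h^k-h^\infty\|=O(\gamma^{k/2})$, and hence $\lambda^k\to\lambda^\infty$ and $G^k\to G^\infty$ at the same geometric rate via \eqref{eq G raw}; since $(h^\infty,\lambda^\infty)$ satisfies \eqref{eq Bellman model}, the earlier theorem gives that $\lambda^\infty$ is the maximal average reward and $G^\infty$ the optimal policy, as claimed. I expect the main obstacle to be the third step: turning the qualitative ``the signal is forgotten in two steps'' observation into a quantitative contraction for an \emph{unbounded-reward, average-cost} problem --- producing $\gamma$ explicitly and checking it is not destroyed by the unbounded (but universal) linear terms. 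A secondary nuisance is pinning down an initialization class genuinely invariant under $\mathcal{L}$, so that the a priori estimates of the second step really apply to every iterate.
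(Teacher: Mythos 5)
Your overall architecture (recognize the scheme as relative value iteration, confine the iterates to an invariant class, prove a contraction, apply Banach and then the Sch\"al--Feinberg identification) is the same as the paper's, and your steps 1, 2 and 4 are essentially what the appendix does. The decisive difference, and the gap, is in your step 3. Your contraction mechanism is a two-step Doeblin minorization: you claim that after two transitions the state is refreshed except for the binary $q$, so the two-step continuation value depends on $h$ ``only through two real numbers.'' This is false. After two steps the signal pair $(x_{t+2,0},x_{t+2,1})=(\omega_{t+1},\omega_t)$ is indeed a fresh product Gaussian, but $q_{t+2}=u_{t+1}(q_{t+1})$ is decided by the policy at the intermediate state $(\omega_t,x_{t,0},q_{t+1})$, hence is a function of $\omega_t=x_{t+2,1}$ \emph{and} of the initial state. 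The two-step expectation of $h$ therefore depends on $h$ through the two one-variable functions $\bar h_{\pm}(y)=\int h(z,y,\pm 1)f(z)\,dz$ integrated over initial-state-dependent regions, not through two scalars. Worse, the relevant ergodicity coefficient is not uniformly positive: the region where the intermediate decision is ``long'' is $\{\omega_t : x_{t,0}\ge G(\omega_t,q_{t+1})\}$, essentially a half-line with endpoint $G^{-1}(x_{t,0})$, and as $x_{t,0}$ ranges over $\R$ this endpoint ranges over all of $\R$. For two initial states with very different $x_{t,0}$ the two-step kernels can have overlap arbitrarily close to zero in the $q$-coordinate, so no uniform minorization, and hence no unconditional span contraction of $\mathcal{T}^2$, is available on the unbounded state space.

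This is precisely why the paper does something different: it reduces, via symmetry, to scalar functions $H$ and $G$, and proves that the coupled update $(T_1,T_2)$ is a contraction in \emph{weighted} norms, $\|H\|=\rho_0^{-1}\sup_x|H(x)/(x+\tfrac{c}{2\rho_1})|$ and $d(G_1,G_2)=\tfrac{\rho_1}{\rho_0}\sup_x|(G_1-G_2)(x)/x|$, with the policy boundary $G$ carried as an explicit second coordinate of the iteration rather than absorbed into $h$. The contraction constants $a_{ij}$ are then estimated term by term and are shown to sum to less than one only under smallness assumptions on $c$ and on $\rho_1/\rho_0$ (see the remark on the smallness of the covariance ratio). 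That the theorem in fact requires such hypotheses is a second warning sign for your sketch, which promises an explicit unconditional modulus $\gamma<1$. Your step 2 also asserts that $\sup_s|h_1^k(s)-h_2^k(s)|$ stays bounded because the linear part is ``universal''; in the paper's framework differences of admissible bias functions are only controlled with a weight growing linearly in $|x|$, and you would need to actually prove the cancellation of the linear parts (it plausibly holds because $\int_{G^{-1}(x)}^{-G^{-1}(x)}f\to 1$ at a Gaussian rate for every admissible $G$, but that argument is absent). To repair your proof you would need to replace the Doeblin step by a quantitative estimate of how the policy-dependent switching regions differ, which is exactly the content of the paper's bounds on $|G_1^{-1}(x)-G_2^{-1}(x)|$ and of the coefficients $a_{12},a_{21},a_{22}$.
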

The convergence of the algorithm is proved in Appendix \ref{sec:convergence
analysis}. Here we remark that the convergence of the algorithm is based on a
contraction mapping. This guarantees that the iterations converge exponentially
fast.

\section{Approximate optimal policy with small transaction costs}
\label{sec:numer}
\subsection{First-order expansion}
In this section, we derive a first-order expansion of the optimal policy
function $G$ in the case of small transaction costs. We should compare it to a
baseline model where we overlook the impact of the current signal on our future
decision-making and instead maximize the immediate reward at every period. In
other words, we go long if the immediate reward minus transaction cost (if any)
is greater than that of going short. Supposing we start short, we go long only
if
\[
    \rho_0 x_0 + \rho_1 x_1 - c \geq - \rho_0 x_0 - \rho_1 x_1.
\]
Likewise, if we start long, we go short only if
\[
    - \rho_0 x_0 - \rho_1 x_1 - c \geq \rho_0 x_0 + \rho_1 x_1.
\]
This naive strategy results in the following naive policy: the no-trade zone is
the region between the graph of
\begin{equation}\label{eq naive policy 1}
    G_{naive}(x, 1) = -\frac{\rho_0}{\rho_1} x  - \frac{c}{2\rho_1}
\end{equation} and
\begin{equation}\label{eq naive policy -1}
    G_{naive}(x, -1) = -\frac{\rho_0}{\rho_1} x  + \frac{c}{2\rho_1}.
\end{equation}
Above $G_{naive}(x, -1)$, we go short. Below $G_{naive}(x, 1)$, we go short.
Otherwise, we take no action. Heuristically, this naive policy is not optimal.
Consider the following case: if the current signal $x_0$ is large, this implies
a larger likelihood that we are going long in the next period, due to the next
period signal's being independent and mean zero. Thus, we would expect the
actual optimal policy to require $x_1$ to be even smaller than what the naive
strategy asks for in order to justify going short in the current period. 

Naturally, we aim to study the properties of the optimal policy. Although we are
unable to derive a closed-form expression of $G$, we are able to derive a first
order expansion of $G$ in terms of the transaction cost $c$. We present the
computations below, together with numerical visualizations.

We view $H$ and $G$ as functions of independent variables $x$ and $c$.
Notation-wise, the inverse, i.e. $G^{-1}$, is in terms of the variable $x$ only.
Differentiating \eqref{eq G simplified} in the appendix \ref{sec:convergence
analysis}, i.e.
\begin{equation}\label{eq G transformed}
    \begin{aligned}
        G(x, c) & = - \frac{\rho_0}{\rho_1}x - \frac{c}{2\rho_1} + \frac{1}{2\rho_1}( \int_{G^{-1}(- x, c)}^{G^{-1}(- x - \frac{c}{\rho_1}, c)} - \int_{G^{-1}(x, c)}^{G^{-1}(x-\frac{c}{\rho_1}, c)} ) H(y) f(y) dy \\
        & - x \int_{G^{-1}(x, c)}^{G^{-1}(x - \frac{c}{\rho_1}, c)} f(y) dy - \frac{c}{2\rho_1} \int_{G^{-1}(x, c)}^{G^{-1}(-x, c)} f(y) dy, \\
    \end{aligned}
\end{equation}
yields
\begin{equation}
    \begin{aligned}
        \frac{\partial G}{\partial c}(x, c) & = -\frac{1}{2\rho_1} + \frac{1}{2\rho_1} H(\cdot, c)f(\cdot)|_{G^{-1}(-x-\frac{c}{\rho_1}, c)} \left( -\frac{1}{\rho_1} \frac{\partial G^{-1}}{\partial x} (-x - \frac{c}{\rho_1}, c) + \frac{\partial G^{-1}}{\partial c}(-x - \frac{c}{\rho_1}, c) \right) \\
        & - \frac{1}{2\rho_1} H(\cdot, c)f(\cdot)|_{G^{-1}(-x, c)} \frac{\partial G^{-1}}{\partial c}(-x, c) \\ 
        & - \frac{1}{2\rho_1} H(\cdot, c)f(\cdot)|_{G^{-1}(x - \frac{c}{\rho_1}, c)} \left( -\frac{1}{\rho_1} \frac{\partial G^{-1}}{\partial x} (x - \frac{c}{\rho_1}, c) + \frac{\partial G^{-1}}{\partial c}(x - \frac{c}{\rho_1}, c) \right) \\
        & + \frac{1}{2\rho_1} H(\cdot, c)f(\cdot)|_{G^{-1}(x, c)} \frac{\partial G^{-1}}{\partial c}(x, c) \\
        & - xf(G^{-1}(x - \frac{c}{\rho_1}, c)) \left( -\frac{1}{\rho_1} \frac{\partial G^{-1}}{\partial x}(x - \frac{c}{\rho_1}, c) + \frac{\partial G^{-1}}{\partial c}(x - \frac{c}{\rho_1}, c) \right) \\
        & + x f(G^{-1}(x, c)) \frac{\partial G^{-1}}{\partial c}(x) \\
        & -\frac{1}{2\rho_1} \int_{G^{-1}(x, c)}^{G^{-1}(-x, c)} f(y) dy \\
        & + \text{terms which evaluate to zero at $c=0$}
    \end{aligned}
\end{equation}

\begin{remark}
    $G^{-1}$ in the integral bounds and $H$ in the integrand all contain $c$. 
    \begin{enumerate}
        \item When we let $c = 0$ and let the derivative fall on $H$, the terms
        vanish due to equal integral lower and upper bounds
        \item When $\frac{d}{dc}$ falls on $G^{-1}(x-\frac{c}{\rho_1}, c)$, it
        yields two terms, one from first variable $x - \frac{c}{\rho_1}$,
        and another from the second variable $c$.
    \end{enumerate}
\end{remark}
Note that when $c = 0$, $G(x, 0) = -\frac{\rho_0}{\rho_1} x$ and $G^{-1}(x) = -
\frac{\rho_1}{\rho_0} x$. Thus $\frac{\partial G^{-1}}{\partial x}(x, 0) = -
\frac{\rho_1}{\rho_0}$. Letting $c = 0$ yields
\begin{equation}\label{eq dG/dc at c=0}
   \begin{aligned}
        \frac{\partial G}{\partial c}(x, 0) 
        & = -\frac{1}{2\rho_1} - \frac{1}{2\rho_1^2} H(\frac{\rho_1}{\rho_0}x,
        0) f(\frac{\rho_1}{\rho_0}x) (-\frac{\rho_1}{\rho_0}) +
        \frac{1}{2\rho_1^2} H(-\frac{\rho_1}{\rho_0}x, 0)
        f(\frac{\rho_1}{\rho_0}x) (-\frac{\rho_1}{\rho_0}) \\
        & - xf(-\frac{\rho_1}{\rho_0}x) (-\frac{1}{\rho_1})
        (-\frac{\rho_1}{\rho_0}) - \frac{1}{2\rho_1}
        \int_{-\frac{\rho_1}{\rho_0}x}^{\frac{\rho_1}{\rho_0}x} f(y)dy.
    \end{aligned} 
\end{equation}
It only remains to evaluate $H$ at $c = 0$, which is
\begin{equation}\label{eq H at c=0}
    H(x, 0) = \rho_0 x + \rho_1 x
    \int_{-\frac{\rho_1}{\rho_0}x}^{\frac{\rho_1}{\rho_0}x} f(y)dy - 2\rho_0
    \int_0^{\frac{\rho_1}{\rho_0}x} yf(y)dy
\end{equation}
Thus, we have the first order approximation of the optimal policy with small
transaction costs:
\begin{equation}
    G(x, c) = -\frac{\rho_0}{\rho_1} x + \frac{\partial G}{\partial c}(x, 0) c + o(c)
\end{equation}
where we plug \eqref{eq H at c=0} into \eqref{eq dG/dc at c=0} to evaluate the
derivative term.

\subsection{Numerical simulation}
Below we present some plots of the first-order expansion of G in comparison with
those of the naive policy (\ref{eq naive policy 1}).
\begin{figure}[H]
\centering
    \begin{minipage}{0.5\textwidth}
        \centering
        \includegraphics[width=\textwidth]{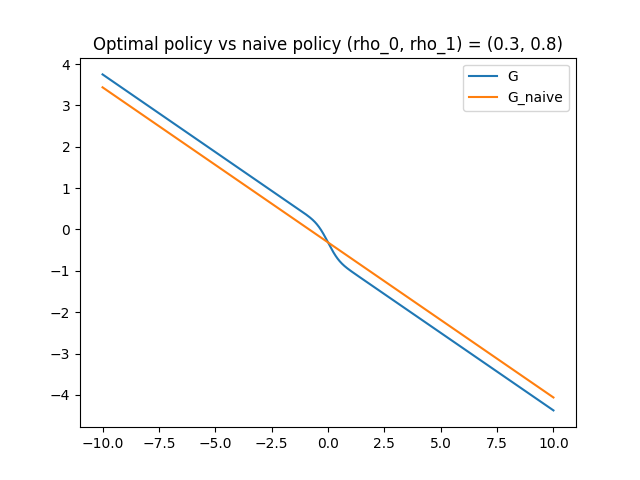}
        \caption{$\rho_0 = 0.3, \rho_1 = 0.8$}
        \label{fig:image1}
    \end{minipage}%
    \hfill
    \begin{minipage}{0.5\textwidth}
        \centering
        \includegraphics[width=\textwidth]{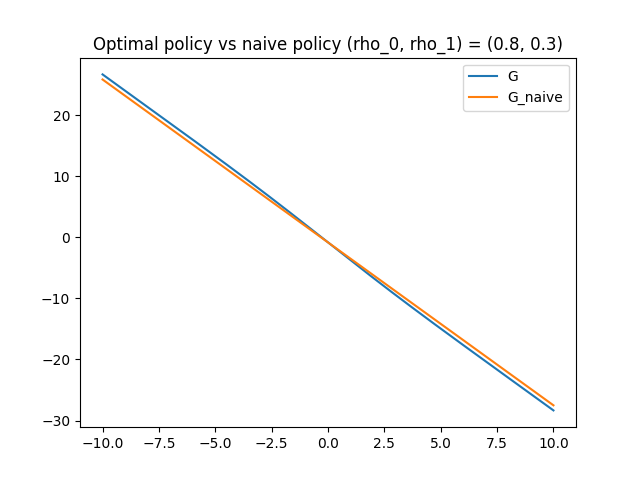}
        \caption{$\rho_0 = 0.8, \rho_1 = 0.3$}
        \label{fig:image2}
    \end{minipage}
\end{figure}
When $\rho_1 \ll \rho_0$, the optimal policy $G$ does not deviate far from the
naive policy, as can be seen in Figure \ref{fig:image2}. This is because the
current signal makes its impact mostly at the current stage. Its influence at
the next stage decays rapidly, as is implied by the smallness of $\rho_1$. Thus,
taking into account the influence of $x_0$ on the future does not make a
significant difference in this case. When $\rho_1$ is comparable to $\rho_0$,
the difference is greater, as can be seen in Figure \ref{fig:image1}. Due to the
fact that the current signal still carries considerable weight in the next
stage, the optimal policy outperforms the naive one consistently. In the
following table, we present results of numerical simulations under a variety of
parameters. The transaction cost parameter $c$ is taken to be $0.5$. The
correlation coefficients $\rho_0, \rho_1$ are taken so that $\rho_0^2 + \rho_1^2
= 0.8$.
% \begin{table}[H]
%    \begin{tabular}{c c c}
%        \hline
%        $\rho_0, \rho_1$ & avg gross return (optimal) & avg gross return (naive) \\ \hline
%        0.889, 0.100 & 0.687 & 0.686 \\ \hline \\
%        0.872, 0.200 & 0.690 & 0.687 \\ \hline \\
%        0.843, 0.300 & 0.693 & 0.687 \\ \hline \\
%        0.800, 0.400 & 0.696 & 0.687 \\ \hline \\
%        0.742, 0.500 & 0.698 & 0.688 \\ \hline \\
%        0.660, 0.600 & 0.697 & 0.686 \\ \hline \\
%        0.600, 0.660 & 0.696 & 0.686 \\ \hline \\
%        0.500, 0.742 & 0.695 & 0.688 \\ \hline \\
%        0.400, 0.800 & 0.691 & 0.687 \\ \hline \\
%        0.300, 0.843 & 0.686 & 0.686 \\ \hline \\
%        0.200, 0.872 & 0.681 & 0.687 \\ \hline \\
%    \end{tabular}
%\end{table}

%original table with both gross and net return, too long to fit into the page
\begin{table}[H]
     \begin{tabular}{c c c c c}
         \hline
         $\rho_0, \rho_1$ & gross (optimal) & gross (naive) & net (optimal) & net
          (naive)\\ \hline
         0.889, 0.100 & 0.687 & 0.686 & 0.507 & 0.507 \\ \hline \\
         0.872, 0.200 & 0.690 & 0.687 & 0.523 & 0.523 \\ \hline \\
         0.843, 0.300 & 0.693 & 0.687 & 0.537 & 0.536 \\ \hline \\
         0.800, 0.400 & 0.696 & 0.687 & 0.549 & 0.548 \\ \hline \\
         0.742, 0.500 & 0.698 & 0.688 & 0.558 & 0.556 \\ \hline \\
         0.660, 0.600 & 0.697 & 0.686 & 0.562 & 0.558 \\ \hline \\
         0.600, 0.660 & 0.696 & 0.686 & 0.563 & 0.558 \\ \hline \\
         0.556, 0.700 & 0.695 & 0.688 & 0.564 & 0.556 \\ \hline \\
         0.400, 0.800 & 0.691 & 0.687 & 0.556 & 0.546 \\ \hline \\
         0.300, 0.843 & 0.686 & 0.686 & 0.548 & 0.536 \\ \hline \\
         0.200, 0.872 & 0.681 & 0.687 & 0.536 & 0.523 \\ \hline \\
     \end{tabular}
\end{table}
The simulation results show that the optimal policy outperforms the naive policy
consistently in terms of both gross and net return. It is worth noting that when
$\rho_1 \ll \rho_0$, the results from the two policies vary by little.
Heuristically, this is due to the fact that $\rho_1 \ll \rho_0$ implicates that
the current signal will preserve very little predictive power in the next future
period. In this case, it is safe to ignore the impact of the current signal on
the future, which is exactly what the naive policy was doing. However, when the
relative magnitude of $\rho_1$ and $\rho_0$ becomes larger, the improvement of
multi-period strategy cannot be ignored. 

Another interesting fact is that the optimal policy G does not always result in
lower cumulative transaction costs, see the table below.
\begin{table}[H]
    \begin{tabular}{c c c}
        \hline
        $\rho_0, \rho_1$ & cost (optimal) & cost (naive) \\ \hline
        0.889, 0.100 & 0.180 & 0.179 \\ \hline \\
        0.872, 0.200 & 0.168 & 0.164 \\ \hline \\
        0.843, 0.300 & 0.156 & 0.151 \\ \hline \\
        0.800, 0.400 & 0.147 & 0.140 \\ \hline \\
        0.742, 0.500 & 0.140 & 0.132 \\ \hline \\
        0.660, 0.600 & 0.135 & 0.127 \\ \hline \\
        0.600, 0.660 & 0.133 & 0.127 \\ \hline \\
        0.500, 0.742 & 0.132 & 0.132 \\ \hline \\
        0.400, 0.800 & 0.134 & 0.140 \\ \hline \\
        0.300, 0.843 & 0.138 & 0.151 \\ \hline \\
        0.200, 0.872 & 0.144 & 0.164 \\ \hline \\
    \end{tabular}
\end{table}

In fact, when $\rho_1$ is small compared to $\rho_0$, following the optimal
policy generates more transaction costs. Note that despite this, the optimal
policy still outperforms the naive policy in terms of net return. In other
words, it trades more often but does so in a more ``clever" way. However, when
$\rho_1$ is large compared to $\rho_0$, the optimal policy generates lower
transaction costs. This can be explained mathematically by the following
observation.

\begin{figure}[H]
\centering
    \begin{minipage}{0.5\textwidth}
        \centering
        \includegraphics[width=\textwidth]{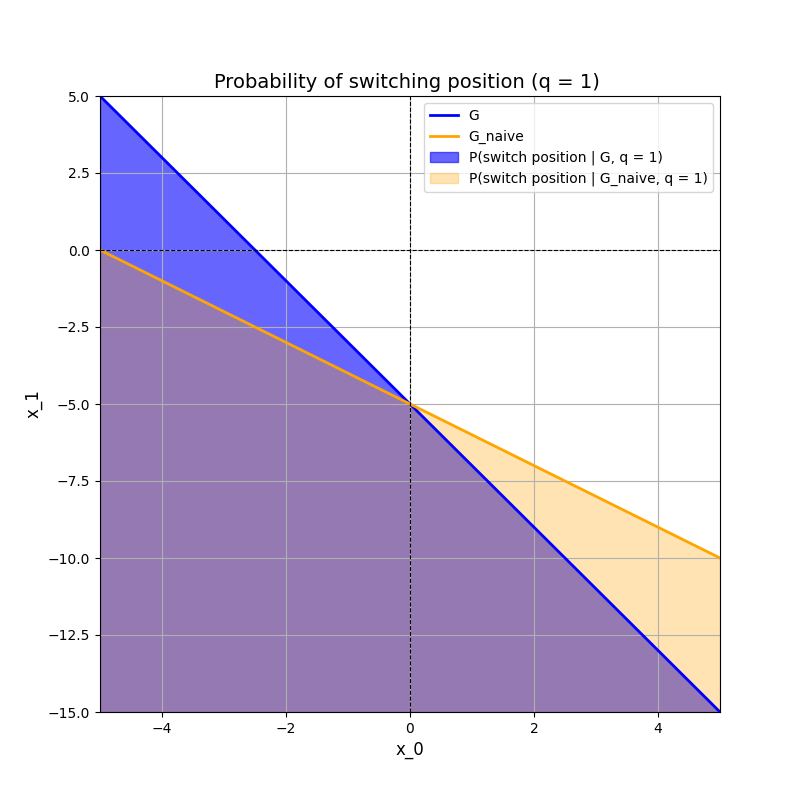}
        \caption{probability of switching position with G and $G_{naive}$ respectively}
        \label{fig:image3}
    \end{minipage}%
    \hfill
    \begin{minipage}{0.5\textwidth}
        \centering
        \includegraphics[width=\textwidth]{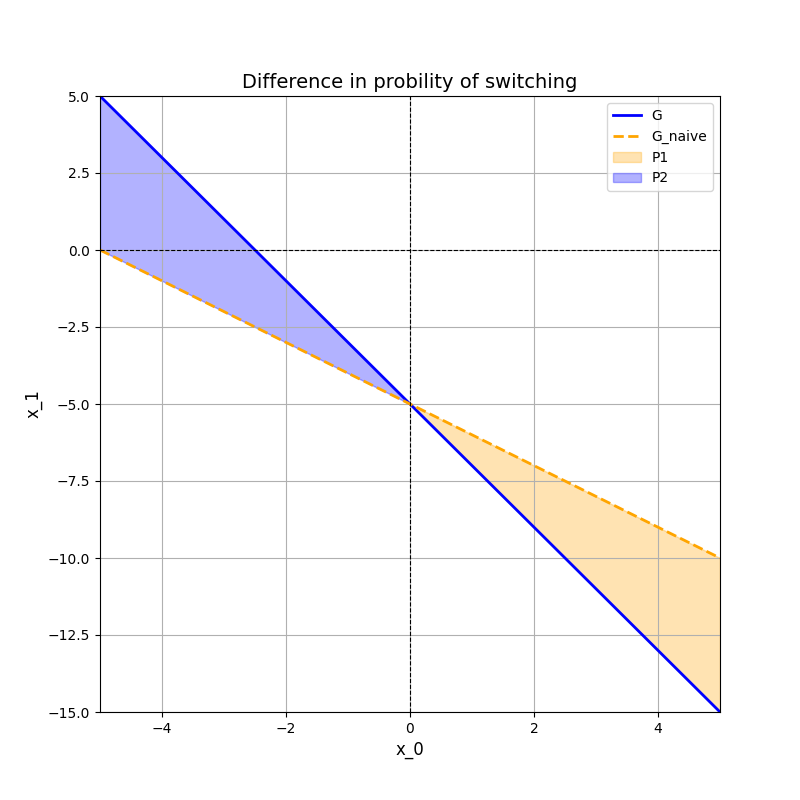}
        \caption{difference in the probability}
        \label{fig:image4}
    \end{minipage}
\end{figure}
In Figure \eqref{fig:image3}, the $x_0x_1$-plane is the sample space of the
independent bivariate normal random variables $X_t, X_{t-1}$. Suppose we start
with a long position, i.e. $q = 1$. The blue area corresponds to the ``trade
zone" if we use policy $G$ as the decision maker. Likewise, the orange area
corresponds to the ``trade zone" of $G_{naive}$. Evaluating the probability of
these two regions, we have
\begin{equation}\label{eq blue>orange}
    P(blue) > P(orange)
\end{equation} 
if $\rho_1$ is relatively small compared to $\rho_0$.

To see why this is true, note that their difference, $P(blue) - P(orange)$, is
represented by the probability of the blue region in Figure \ref{fig:image4}
minus that of the orange region. Note that conditioning on $q = 1$ shifts our
belief about the distribution of $x_1$ toward the positive side. As a result,
the conditional joint probability density of $x_0, x_1$ is more concentrated in
the upper half plane $(x_1 > 0)$. Thus we have
\[
    P(blue) > P(orange).
\]

A similar argument applies to the case where $q = -1$. This explains the reason
why the optimal policy generates more transaction costs if $\rho_1$ is small
compared to $\rho_0$.

\section{Conclusion}
In this paper, we examined a multi-period trading problem in the presence of
transaction costs, where the return is modeled by (\ref{eq correlation model}).
We showed that by the optimal policy should ``aim ahead of the current period",
i.e., take into consideration the effect of our decision on future periods. In
Section \ref{sec:numer}, we studied the first-order approximation of the optimal
policy, and examined its performance using simulated data. It has been validated
that the optimal policy consistently outperforms the baseline policy we used for
comparison, in terms of gross and net return. However, the difference in
performance is minimal when the predictive power of the lagged signal is much
weaker compared to the current signal, e.g. when $\rho_1 < 0.1 \rho_0$. In this
case, we suspect that using the multi-period framework does not yield enough
improvement to justify the extra modeling complexity. If the predictive power of
the lagged signal does not decay fast, e.g. $\frac{\rho_1}{\rho_0} > 0.25$, then
the optimal policy generates a substantial improvement.

In the final part of the paper, we make a few comments on directions for further
research.

1. \emph{Generalized portfolio selection rules}
In this paper, we imposed several restrictions on the portfolio make-up, for
example the single asset condition and the two-state restriction. A natural
generalization is to allow the portfolio manager to hold any amount of shares
inside a continuous bounded interval, and to allow the manager to select from
multiple assets in constructing a portfolio. \\
2. \emph{Combining fast and slow signals}
The framework can be generalized to the practical case where the manager is in
possession of a ``slow" signal and a ``fast" signal. For example, the fast
signal predicts the price movement at an intraday time scale while the slow one
predicts movements on the order of days. The manager may aim to make decisions
at the speed of the fast signal but also combine the slow signal to benefit from
both.

\appendix
\section{Convergence analysis}\label{sec:convergence analysis}
In this section, we prove the convergence of Algorithm \ref{algo}.

We begin by extracting some useful information from $h$ and $G$. We assume the
following symmetries for $h$ and $G$:
\begin{enumerate}
\item $G$ is odd:
\[
    G(x_0, q) = - G(-x_0, -q).
\]
(Think of $G(\cdot, 1)$ as a perturbation of the straight line with slope
$-\frac{\rho_0}{\rho_1}$ and y-intercept $-\frac{c}{2\rho_1}$. $G(\cdot, -1)$
has same slope but with positive intercept.)
Also, 
\[
    G(x_0, 1) = G(x_0, -1) - \frac{c}{\rho_1}.
\]
\item $h$ is even: 
\[
    h(x_0, x_1, q) = h(-x_0, -x_1, -q).
\]
\end{enumerate}

From \eqref{eq h raw}, we can further separate the variables by
\begin{equation}\label{eq h piecewise 1}
    h(x_0, x_1, q) = \rho_1 x_1 + H_+(x_0, q) \hspace{2em} when\ \rho_1 x_1 \geq \rho_0 G(x_0, q),
\end{equation}
and
\begin{equation}\label{eq h piecewise 2}
    h(x_0, x_1, q) = -\rho_1 x_1 + H_-(x_0, q) \hspace{2em} when\ \rho_1 x_1 < \rho_0 G(x_0, q)
\end{equation}
for some functions $H_+$ and $H_-$. They inherit the symmetry of $h$ in the
following way:
\[
    H_+(x, q) = H_-(-x, -q).
\]
Also, we have
\[
    H_+(x_0, 1) = H_+(x_0, -1) + c 
\]
and
\[
    H_-(x_0, 1) = H_-(x_0, -1) - c.
\]

Thus, it suffices to study $H(x_0, q)$ and $G(x_0, q)$ for $q = 1$. From now on,
we will write $H(x)$ and $G(x)$ instead when there is no risk of confusion. Let
$\kappa$ denote the ratio of $\rho_1$ and $\rho_0$:
\[
    \kappa = \frac{\rho_1}{\rho_0}.
\]
Now we use \eqref{eq h piecewise 1} and \eqref{eq h piecewise 2} to rewrite
\eqref{eq h raw}. We have
\begin{equation}\label{eq H simplified}
    \begin{aligned}
        H(x) =&\; \rho_0 (x + \frac{c}{2\rho_1}) +
        \rho_1 x \int_{G^{-1}(x)}^{-G^{-1}(x)} f(y) dy + \\
        & \left ( \int_{G^{-1}(x)}^{0} - \int_{0}^{-G^{-1}(x)} \right ) H(y) f(y)
        dy - c \int_{0}^{-G^{-1}(x)} f(y) dy \\
        :=&\; T_1(H, G)(x).
    \end{aligned}
\end{equation}
Note that $\lambda$ is removed from the equation due to the following relation
\begin{equation}\label{eq lambda}
    \lambda = -\frac{\rho_0}{2\rho_1}c - \frac{c}{2} + 2 \int_0^{+\infty}
    H(y)f(y) dy.
\end{equation}

On the other hand, $G$ satisfies the equation
\begin{equation}\label{eq G simplified}
    \begin{aligned}
        G(x) & = - \frac{\rho_0}{\rho_1}x - \frac{c}{2\rho_1} + ( \int_{G^{-1}(- x)}^{G^{-1}(- x - \frac{c}{\rho_1})} - \int_{G^{-1}(x)}^{G^{-1}(x-\frac{c}{\rho_1})} ) H(y) f(y) dy \\
        & - x \int_{G^{-1}(x)}^{G^{-1}(x - \frac{c}{\rho_1})} f(y) dy - \frac{c}{2\rho_1} \int_{G^{-1}(x)}^{G^{-1}(-x)} f(y) dy \\
        & = T_2(H, G)(x).
    \end{aligned}
\end{equation}

Equations \eqref{eq H simplified} and \eqref{eq G simplified} form the basis of
the iteration scheme we introduced in Section \ref{sec:prelim}, i.e.,
\eqref{algo}. Operators derived from dynamic programming are known to be
contraction mappings under a weighted norm in the finite state space case
(\cite{bertsekas2015parallel}, \cite{tseng1990solving}). Inspired by these, we
define weighted norms for $H$, $G$ and corresponding spaces $S_1$ and $S_2$ as
below:
\[
    \|H\| = \frac{1}{\rho_0} \sup_x \left| \frac{H(x)}{x + \frac{c}{2\rho_1}} \right|
\]
and the space $S_1$ endowed with the norm to be the set of $H$ such that $\|H\|
\leq A_1$ for some $A_1 > 0$ to be determined. 

Define distance
\[
    d(G_1, G_2) = \frac{\rho_1}{\rho_0} \sup_x \left| \frac{G_1(x) - G_2(x)}{x}\right|
\]
and the metric space $S_2$ endowed with the distance to be the set of G such
that
\[
    A_2 \frac{\rho_0}{\rho_1} \leq \left |\frac{dG}{dx} \right| \leq A_3 \frac{\rho_0}{\rho_1}
\]
for some $A_2, A_3 \sim 1$ to be determined. Below is a technical lemma for
evaluating H and G in $S_1, S_2$:
\begin{lemma}\label{lem S1 S2}
    For G in $S_2$, we have
    \[
        \frac{\rho_1}{\rho_0 A_3}|x + \frac{c}{2\rho_1}| \leq |G^{-1}(x)| \leq \frac{\rho_1}{\rho_0 A_2}|x + \frac{c}{2\rho_1}|.
    \]
    
    For $G_1, G_2$ in $S_2$, we have
    \[
        |G^{-1}_1(x) - G^{-1}_2(x)| \leq d(G_1, G_2) \frac{\rho_1}{\rho_0 A_2^2} |x + \frac{c}{2\rho_1}|.
    \]
    We also list the maximum related with f(y) for convenience:
    \begin{enumerate}
        \item $\sup |f(y)| = \frac{1}{\sqrt{2\pi}}$,
        \item $\sup |yf(y)| = \frac{1}{\sqrt{2\pi e}}$,
        \item $\sup |yf(\frac{y}{A_3})| = \frac{A_3}{\sqrt{2\pi e}}$.
    \end{enumerate}
\end{lemma}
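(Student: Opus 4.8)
The plan is to dispatch the three assertions separately: the two $G^{-1}$ bounds follow from the derivative constraints built into the definition of $S_2$, and the three Gaussian suprema are elementary one-variable optimizations. Starting with the two-sided bound on $|G^{-1}(x)|$, the first step is to pin down the zero of $G^{-1}$. Using the two symmetry relations imposed on $G$ — namely $G(x_0,1)=G(x_0,-1)-c/\rho_1$ and $G(x_0,q)=-G(-x_0,-q)$ — and evaluating at $x_0=0$, we get $G(0,1)=-G(0,-1)$ together with $G(0,1)=G(0,-1)-c/\rho_1$, hence $G(0)=G(0,1)=-c/(2\rho_1)$; equivalently, $G^{-1}$ vanishes exactly at $x=-c/(2\rho_1)$, i.e. where $x+\tfrac{c}{2\rho_1}=0$. (The same identity also drops out of \eqref{eq G simplified} directly: at $x=0$ the two $H$-integrals coincide and the other two integrals are over a degenerate interval.) Next, since $G\in S_2$ has $|G'|$ bounded below by $A_2\rho_0/\rho_1>0$ and, being continuous, of constant sign, $G$ is a strictly monotone bijection of $\R$, so $G^{-1}$ is well defined and differentiable with $|(G^{-1})'(x)|=1/|G'(G^{-1}(x))|\in[\rho_1/(\rho_0A_3),\,\rho_1/(\rho_0A_2)]$. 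Writing $G^{-1}(x)=\int_{-c/(2\rho_1)}^{x}(G^{-1})'(t)\,dt$ and using that $(G^{-1})'$ keeps a constant sign then yields $\tfrac{\rho_1}{\rho_0A_3}\,|x+\tfrac{c}{2\rho_1}|\le|G^{-1}(x)|\le\tfrac{\rho_1}{\rho_0A_2}\,|x+\tfrac{c}{2\rho_1}|$.

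For the Lipschitz-type estimate, put $a=G_1^{-1}(x)$ and $b=G_2^{-1}(x)$, so $G_1(a)=G_2(b)=x$ and therefore $G_1(a)-G_1(b)=G_2(b)-G_1(b)$. Applying the mean value theorem to $G_1$ (with $|G_1'|\ge A_2\rho_0/\rho_1$) on the left and the definition of $d$ on the right — which gives $|G_1(b)-G_2(b)|\le\tfrac{\rho_0}{\rho_1}d(G_1,G_2)\,|b|$ — we obtain $A_2\tfrac{\rho_0}{\rho_1}|a-b|\le\tfrac{\rho_0}{\rho_1}d(G_1,G_2)|b|$, i.e. $|a-b|\le d(G_1,G_2)\,|b|/A_2$. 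Bounding $|b|=|G_2^{-1}(x)|$ by the upper estimate from the first part and substituting gives the claimed $|G_1^{-1}(x)-G_2^{-1}(x)|\le d(G_1,G_2)\,\tfrac{\rho_1}{\rho_0A_2^2}\,|x+\tfrac{c}{2\rho_1}|$.

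The three suprema are immediate computations with $f(y)=\tfrac{1}{\sqrt{2\pi}}e^{-y^2/2}$: $|f|$ peaks at $y=0$ with value $1/\sqrt{2\pi}$; $(yf(y))'=\tfrac{1}{\sqrt{2\pi}}(1-y^2)e^{-y^2/2}$ vanishes at $y=\pm1$, giving $\sup|yf(y)|=1/\sqrt{2\pi e}$; and the substitution $u=y/A_3$ rewrites $yf(y/A_3)$ as $A_3\,uf(u)$, so $\sup|yf(y/A_3)|=A_3/\sqrt{2\pi e}$. Overall the lemma is a bundle of routine estimates rather than a substantive result; the only place that really needs care is identifying the zero of $G^{-1}$ as $x=-c/(2\rho_1)$, so that the correct center $x+\tfrac{c}{2\rho_1}$ appears in all the bounds, and keeping track of the (constant) sign of $G'$, hence of $(G^{-1})'$, so that the fundamental-theorem-of-calculus step delivers the inequalities with the absolute values exactly as stated.
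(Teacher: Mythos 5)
Your proof is correct; the paper in fact states Lemma \ref{lem S1 S2} without any proof, so there is no argument of the authors' to compare against, but your route (pinning down $G(0)=-c/(2\rho_1)$ from the two symmetry relations so that $G^{-1}$ vanishes at $x=-c/(2\rho_1)$, then applying the mean value theorem / fundamental theorem of calculus with the derivative bounds $A_2\rho_0/\rho_1\le|G'|\le A_3\rho_0/\rho_1$ defining $S_2$, and elementary calculus for the Gaussian suprema) is exactly the routine verification the authors evidently intended. The one point genuinely worth making explicit, which you do, is the identification of the centering $x+\tfrac{c}{2\rho_1}$ via the symmetries, without which the stated bounds would be wrong.
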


Then we have the following proposition:
\begin{proposition}
$T_1, T_2$ maps $S_1 \times S_2$ to $S_1$ and $S_2$ respectively:
\[
    T_1: S_1 \times S_2 \rightarrow S_1
\]
and
\[
    T_2: S_1 \times S_2 \rightarrow S_2
\]
\end{proposition}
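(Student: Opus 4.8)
The plan is to verify the two inclusions directly from the defining formulas \eqref{eq H simplified} and \eqref{eq G simplified}, using Lemma \ref{lem S1 S2} to control $G^{-1}$ and the elementary Gaussian bounds recorded there. Fix $H \in S_1$ and $G \in S_2$. Since $|dG/dx|$ is bounded away from zero, $G$ is a strictly monotone $C^1$ bijection, so $G^{-1}$ exists and is $C^1$; together with the normalization $G(0) = -\tfrac{c}{2\rho_1}$ (which is forced by the oddness of $G$ and the relation $G(\cdot,1) = G(\cdot,-1) - \tfrac{c}{\rho_1}$), this places us exactly in the hypotheses of Lemma \ref{lem S1 S2}, giving $|G^{-1}(x)| \le \tfrac{\rho_1}{\rho_0 A_2}\,|x + \tfrac{c}{2\rho_1}|$ and $|dG^{-1}/dx| \le \tfrac{\rho_1}{\rho_0 A_2}$. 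In particular $T_1(H,G)$ is a well-defined function and $T_2(H,G)$ a well-defined $C^1$ function.

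For $T_1$, the first summand of \eqref{eq H simplified} is exactly $\rho_0\,(x + \tfrac{c}{2\rho_1})$, i.e. $\rho_0$ times the weight appearing in $\|\cdot\|$, so it contributes exactly $1$ to $\|T_1(H,G)\|$. It then remains to bound the three remaining terms
\[
    R_1(x) = \rho_1 x \int_{G^{-1}(x)}^{-G^{-1}(x)} f(y)\,dy, \qquad
    R_3(x) = -c\int_{0}^{-G^{-1}(x)} f(y)\,dy,
\]
\[
    R_2(x) = \left( \int_{G^{-1}(x)}^{0} - \int_{0}^{-G^{-1}(x)} \right) H(y) f(y)\,dy
\]
each by a constant multiple of $\rho_0\,|x + \tfrac{c}{2\rho_1}|$. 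For $R_2$ and $R_3$ this is immediate: each integration interval has length $|G^{-1}(x)| \le \tfrac{\rho_1}{\rho_0 A_2}|x + \tfrac{c}{2\rho_1}|$, the hypothesis $\|H\| \le A_1$ gives $|H(y)| \le \rho_0 A_1\,|y + \tfrac{c}{2\rho_1}|$, and $\sup|f| = \tfrac{1}{\sqrt{2\pi}}$, $\sup|yf(y)| = \tfrac{1}{\sqrt{2\pi e}}$ control the integrands. For $R_1$ one must be slightly careful where the weight degenerates, namely near $x = -\tfrac{c}{2\rho_1}$: there $|x| \to \tfrac{c}{2\rho_1} \neq 0$ but $G^{-1}(x) \to 0$, so one bounds $\big|\int_{G^{-1}(x)}^{-G^{-1}(x)} f\big|$ by $2|G^{-1}(x)|\sup|f|$, whereas for $|x|$ large one bounds the same integral by $1$ and uses $|x| \le |x + \tfrac{c}{2\rho_1}| + \tfrac{c}{2\rho_1}$. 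Combining the two (equivalently, bounding the Gaussian integral by the minimum of the two bounds) gives $\|T_1(H,G)\| \le 1 + \varepsilon$, with $\varepsilon = \varepsilon(A_1,A_2,c,\kappa)$ small (of order $\kappa$ and $c/\rho_0$); choosing $A_1 > 1$ so that $\varepsilon \le A_1 - 1$ yields $T_1(H,G) \in S_1$.

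For $T_2$, differentiate \eqref{eq G simplified} in $x$: the leading term contributes exactly $-\tfrac{\rho_0}{\rho_1}$, and every other contribution is either a Leibniz boundary term $\pm H(G^{-1}(\xi))\,f(G^{-1}(\xi))\,\tfrac{dG^{-1}}{dx}(\xi)$ with $\xi \in \{x, -x, x - \tfrac{c}{\rho_1}, -x - \tfrac{c}{\rho_1}\}$, or an interior term $-\int_{G^{-1}(x)}^{G^{-1}(x - c/\rho_1)} f$, or one of the analogous terms coming from the last integral in \eqref{eq G simplified}. Using $|dG^{-1}/dx| \le \tfrac{\rho_1}{\rho_0 A_2}$, the pointwise bound $|H(G^{-1}(\xi))| \le \rho_0 A_1\big(|G^{-1}(\xi)| + \tfrac{c}{2\rho_1}\big)$, the estimate on $|G^{-1}|$ from Lemma \ref{lem S1 S2}, the Gaussian sup-bounds, and the fact that $G^{-1}(x)$ and $G^{-1}(x - \tfrac{c}{\rho_1})$ differ by $O(c)$ (so the corresponding short integrals are $O(c)$), one checks that the sum of all non-leading terms is bounded by $\delta\,\tfrac{\rho_0}{\rho_1}$ with $\delta = \delta(A_1,A_2,c,\kappa)$ small. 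Hence $(1 - \delta)\tfrac{\rho_0}{\rho_1} \le |\tfrac{d}{dx} T_2(H,G)| \le (1 + \delta)\tfrac{\rho_0}{\rho_1}$, and taking $A_2 \le 1 - \delta$, $A_3 \ge 1 + \delta$ gives $T_2(H,G) \in S_2$.

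The main obstacle is the simultaneous bookkeeping of the constants: the error $\varepsilon$ controlling $T_1$ itself grows with $A_1$, while the available slack is only $A_1 - 1$, and similarly $\delta$ for $T_2$ depends on $A_1, A_2, A_3$ while the slacks are $1 - A_2$ and $A_3 - 1$; one therefore has to exhibit an explicit feasible triple $(A_1, A_2, A_3)$, necessarily in a parameter regime where $\kappa$ and $c$ are not too large. A recurring secondary point, present in both halves, is the degeneracy of the weights at $x = -\tfrac{c}{2\rho_1}$: one must use that $G^{-1}$ vanishes there at a comparable rate (again by Lemma \ref{lem S1 S2}) so that the non-leading terms vanish fast enough, which is precisely why the Gaussian integrals have to be estimated by the minimum of the ``$\le 1$'' bound and the ``$\le$ length $\times \sup|f|$'' bound rather than by either one alone.
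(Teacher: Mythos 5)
Your proof follows essentially the same route as the paper's: bound each term of $T_1(H,G)$ in the weighted norm via Lemma \ref{lem S1 S2} and the Gaussian sup-bounds to get $\|T_1(H,G)\|\leq 1+\varepsilon$ and choose $A_1$ accordingly, then differentiate $T_2(H,G)$, isolate the leading $-\tfrac{\rho_0}{\rho_1}$, and show the remaining Leibniz boundary and interior terms fit inside the band $[A_2,A_3]\tfrac{\rho_0}{\rho_1}$ with $A_2=1-\delta$, $A_3=1+\delta$. The one substantive divergence is in the $T_2$ half: the paper pairs the boundary terms whose arguments differ by $\tfrac{c}{\rho_1}$ (and exploits the $\pm x$ symmetry) and applies the product-difference inequality $|a'b'c'-abc|\leq\sup|bc||a'-a|+\cdots$, so that the quantities IV and V each carry the small factor $A_3-A_2=2\delta$; your term-by-term bound instead yields a contribution of order $\kappa A_1$ with no factor of $\delta$, which still closes the inclusion for $\kappa$ and $c$ small but forces $\delta\gtrsim\kappa$ rather than leaving $\delta$ essentially free. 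Your explicit treatment of the weight degeneracy at $x=-\tfrac{c}{2\rho_1}$ (taking the minimum of the "length times $\sup|f|$" and "$\leq 1$" bounds for the Gaussian integral in $R_1$) is a correct refinement of a point the paper absorbs into its $\mathcal{O}(\tfrac{c}{\rho_0})$ terms.
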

\begin{proof}
    We begin with $T_1$. For $H$ in $S_1$, we have
\[
    H(x) \leq A_1 \rho_0 |x_0 + \frac{c}{2\rho_1}|.
\]
Using this and lemma \ref{lem S1 S2} yields
\begin{align*}
    \| T_1(H, G)(x) \| & \leq 1 + \frac{\rho_1}{\rho_0} + 2 \sup_x \frac{1}{|x|} \int_{-\frac{\rho_1}{\rho_0 A_2}|x+\frac{c}{2\rho_1}|}^{0} A_1 |y f(y)| dy + \mathcal{O}(\frac{c}{\rho_0}) \\
    & \leq 1 + \frac{\rho_1}{\rho_0} + \frac{\rho_1}{\rho_0} \frac{2}{\sqrt{2\pi e} A_2} A_1 + \mathcal{O}(\frac{c}{\rho_0}).
\end{align*}
Picking 
\[
    A_1 = \left( 1 - \frac{\rho_1}{\rho_0}\frac{2}{\sqrt{2\pi e} A_2} \right)^{-1} (1 + \frac{\rho_1}{\rho_0})
\]
will suffice to prove closedness for $T_1$.

Next we show that
\[
    A_2 \leq \left| \frac{\rho_1}{\rho_0} \frac{d}{dx}T_2(H, G)(x) \right| \leq A_3,
\]
i.e., the image of $T_2$ is contained in $S_2$.
The derivative $\frac{dG}{dx}$ is given by
\begin{equation}
    \begin{aligned}
        \frac{d}{dx}T_2(H,G)(x) & = -\frac{\rho_0}{\rho_1} + \frac{1}{2\rho_1}
        \left[ - H(G^{-1}(-x-\frac{c}{\rho_1})) f(G^{-1}(-x-\frac{c}{\rho_1}))
        \frac{dG^{-1}}{dx}(-x-\frac{c}{\rho_1}) \right. \\
        & - H(G^{-1}(x-\frac{c}{\rho_1})) f(G^{-1}(x-\frac{c}{\rho_1}))
        \frac{dG^{-1}}{dx}(x-\frac{c}{\rho_1}) \\
        & + H(G^{-1}(-x)) f(G^{-1}(-x)) \frac{dG^{-1}}{dx}(-x) \\
        & \left. + H(G^{-1}(x)) f(G^{-1}(x)) \frac{dG^{-1}}{dx}(x) \right] \\
        & + \int_{G^{-1}(x)}^{G^{-1}(x-\frac{c}{\rho_1})} f(y) dy + x [
        f(G^{-1}(x-\frac{c}{\rho_1}))\frac{dG^{-1}}{dx}(x-\frac{c}{\rho_1}) -
        f(G^{-1}(x))\frac{dG^{-1}}{dx}(x) ] \\
        &  + \frac{c}{2\rho_1} [
        f(G^{-1}(-x-\frac{c}{\rho_1}))\frac{dG^{-1}}{dx}(-x-\frac{c}{\rho_1}) +
        f(G^{-1}(x-\frac{c}{\rho_1}))\frac{dG^{-1}}{dx}(x-\frac{c}{\rho_1}) ].
    \end{aligned}
\end{equation}
After simplification, we have
\begin{align*}
    \left| \frac{\rho_1}{\rho_0}\frac{d}{dx}T_2(H,G)(x) \right| & \leq (\geq) 1 \pm \frac{1}{2\rho_0} \left| - H(G^{-1}(-x-\frac{c}{\rho_1})) f(G^{-1}(-x-\frac{c}{\rho_1})) \frac{dG^{-1}}{dx}(-x-\frac{c}{\rho_1}) \right. \\
    & - H(G^{-1}(x-\frac{c}{\rho_1})) f(G^{-1}(x-\frac{c}{\rho_1})) \frac{dG^{-1}}{dx}(x-\frac{c}{\rho_1}) \\
    & + H(G^{-1}(-x)) f(G^{-1}(-x)) \frac{dG^{-1}}{dx}(-x) \\
    & \left. + H(G^{-1}(x)) f(G^{-1}(x)) \frac{dG^{-1}}{dx}(x) \right| \\
    & \pm \frac{\rho_1}{\rho_0} \left| x [ f(G^{-1}(x-\frac{c}{\rho_1}))\frac{dG^{-1}}{dx}(x-\frac{c}{\rho_1}) - f(G^{-1}(x))\frac{dG^{-1}}{dx}(x) ] \right| \\
    &  \pm \mathcal{O}(\frac{c}{\rho_0}).
\end{align*}

Denote IV to be the four terms inside the first absolute value parenthesis, and
V to be the two terms in the second absolute value parenthesis. We start by
estimating IV: We use the following inequality
\[
    |a'b'c' - abc| \leq \sup |bc| |a' - a| + \sup |ca| |b' - b| + \sup |ab| |c' - c|.
\]
In our case, $a$ corresponds to $H(G^{-1}(-x))$, $b$ corresponds to $f$ and $c$
corresponds to $\frac{dG^{-1}}{dx}$.
We have
\[
    |a'-a|, |b'-b| \leq \mathcal{O}(\frac{c}{\rho_0})
\]
due to smoothness of $H$ and $f$. We shall drop these terms for convenience
assuming $c$ is small enough. It only remains to bound $|c'-c|$.
\begin{align*}
    \left| \frac{dG^{-1}}{dx}(y) - \frac{dG^{-1}}{dx}(x) \right| & = \frac{1}{\frac{dG}{dx}(G^{-1}(y))} - \frac{1}{\frac{dG}{dx}(G^{-1}(x))} \\
    & \leq \frac{\frac{\rho_0}{\rho_1}(A_3 - A_2)}{( \frac{\rho_0}{\rho_1} A_2 )^2} \\
    & = \frac{\rho_1}{\rho_0} \frac{A_3 - A_2}{A_2^2}.
\end{align*}
The $\sup |ab|$ part is
\[
    | \sup_x H(G^{-1}(x)) f(G^{-1}(x)) | \leq A_1 \rho_0 \sup_y |yf(y)| \leq \frac{\rho_0 A_1}{\sqrt{2\pi e}}.
\]
Thus, IV is controlled by
\[
    |IV| \leq \frac{1}{\sqrt{2\pi e}} \frac{\rho_1}{\rho_0} A_1 \frac{A_3 - A_2}{A_2^2}.
\]
Similarly, we estimate V by
\[
    |V| \leq \frac{1}{\sqrt{2\pi e}} A_3 \frac{A_3 - A_2}{A_2^2}.
\]
To complete the proof for closedness, we need the following to hold:
\[
    1 - \frac{1}{\sqrt{2\pi e}} \frac{\rho_1}{\rho_0} A_1 \frac{A_3 - A_2}{A_2^2} - \frac{1}{\sqrt{2\pi e}} A_3 \frac{A_3 - A_2}{A_2^2} - \mathcal{O}(\frac{c}{\rho_0}) \geq A_2
\]
and
\[
    1 + \frac{1}{\sqrt{2\pi e}} \frac{\rho_1}{\rho_0} A_1 \frac{A_3 - A_2}{A_2^2} + \frac{1}{\sqrt{2\pi e}} A_3 \frac{A_3 - A_2}{A_2^2} + \mathcal{O}(\frac{c}{\rho_0}) \leq A_3.
\]
Let $A_2 = 1 - \delta, A_3 = 1 + \delta$. These are reduced to
\[
    \frac{1}{\sqrt{2\pi e}} \left( \frac{\rho_1}{\rho_0} A_1 \frac{2}{(1-\delta)^2} + \frac{2(1+\delta)}{(1-\delta)^2} \right) \delta + \mathcal{O}(\frac{c}{\rho_0}) \leq \delta.
\]
Recall that 
\[
    A_1 = \left( 1 - \frac{\rho_1}{\rho_0}\frac{2}{\sqrt{2\pi e} (1-\delta)} \right)^{-1} (1 + \frac{\rho_1}{\rho_0}).
\]
Plug it into the inequality involving $\delta$. Notice that an admissible
$\delta$ exists if $\frac{\rho_1}{\rho_0}$ is small and $c$ is small. This
concludes the proof of closedness for $T_2$, and the proposition.
\end{proof}

\begin{remark}\label{rem smallness cov ratio}
    We made the assumption that $\frac{\rho_1}{\rho_0} $ is small enough
    depending on $c$. 
\end{remark}

It only remains to show that $(T_1, T_2): S_1 \times S_2 \rightarrow S_1 \times
S_2$ is a contraction mapping. We collect the proof in the following
proposition.
\begin{proposition}
    $(T_1, T_2): S_1 \times S_2 \rightarrow S_1 \times S_2 $ is a contraction
    mapping. In fact, there exist constants $a_{i,j} \geq 0$ with $a_{i1} +
    a_{i2} < 1$, $i, j \in \{1, 2\}$ such that
    \begin{equation}\label{eq contraction 1}
        \|T_1(H_1, G_1) - T_1(H_2, G_2)\| \leq a_{11} \|H_1 - H_2\| + a_{12} d(G_1, G_2)
    \end{equation}
    and
    \begin{equation}\label{eq contraction 2}
        \|T_2(H_1, G_1) - T_2(H_2, G_2)\| \leq a_{21} \|H_1 - H_1\| + a_{22} d(G_1, G_2).
    \end{equation}
\end{proposition}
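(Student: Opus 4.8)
The plan is to estimate $T_1$ and $T_2$ separately, reusing the splitting structure that was already employed to prove closedness. For \eqref{eq contraction 1}, I would subtract the two expressions for $T_1(H,G)$ in \eqref{eq H simplified} and group the difference into two types of terms: those arising from the change $H_1 \rightsquigarrow H_2$ in the integrand, and those arising from the change $G_1 \rightsquigarrow G_2$ in the integration limits $G^{-1}(x)$, $-G^{-1}(x)$. The first group is controlled by $\|H_1-H_2\|$ times an integral of $|yf(y)|$ over an interval of length $O\!\big(\tfrac{\rho_1}{\rho_0 A_2}|x+\tfrac{c}{2\rho_1}|\big)$, which after dividing by $\rho_0|x+\tfrac{c}{2\rho_1}|$ produces a factor $a_{11} \sim \tfrac{2}{\sqrt{2\pi e}A_2}\cdot\tfrac{\rho_1}{\rho_0}$, the same quantity that appeared in the choice of $A_1$. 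The second group is handled by the bound $|G_1^{-1}(x)-G_2^{-1}(x)| \le d(G_1,G_2)\tfrac{\rho_1}{\rho_0 A_2^2}|x+\tfrac{c}{2\rho_1}|$ from Lemma \ref{lem S1 S2}, together with $\sup|yf(y)|$ and $\sup_{S_1}\|H\| \le A_1$, giving $a_{12} \sim \tfrac{\rho_1}{\rho_0}\cdot\tfrac{A_1}{\sqrt{2\pi e}A_2^2}$ plus $O(c/\rho_0)$ corrections.

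For \eqref{eq contraction 2} I would do the same with \eqref{eq G simplified}: the difference $T_2(H_1,G_1)(x) - T_2(H_2,G_2)(x)$, after dividing by $\tfrac{\rho_0}{\rho_1}x$, splits into (i) the boundary-value terms $\int_{G^{-1}(\pm x)}^{G^{-1}(\pm x - c/\rho_1)} H(y)f(y)\,dy$, where I vary $H$ and $G^{-1}$ one at a time using the triangle inequality $|H_1f_1 I_1 - H_2 f_2 I_2|$-style splitting as in the $|a'b'c'-abc|$ estimate already used; and (ii) the $-x\int f$ and $-\tfrac{c}{2\rho_1}\int f$ terms, where varying the limits again invokes the $G^{-1}$-Lipschitz bound of Lemma \ref{lem S1 S2}. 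Crucially, many of these terms carry an explicit factor $c/\rho_1$ in the integration-interval length or an explicit $c/(2\rho_1)$ prefactor, so they contribute $O(c/\rho_0)$ to the contraction constants and can be absorbed once $c$ is small; the genuinely non-small contributions again come with a $\tfrac{\rho_1}{\rho_0}$ factor from the $\int f$ term over an interval whose length is comparable to $|x|$ but whose measure under $f$ is small. Collecting, one gets $a_{21}, a_{22}$ of the form (constant)$\cdot\tfrac{\rho_1}{\rho_0}$ + $O(c/\rho_0)$.

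The final step is to verify $a_{i1}+a_{i2}<1$ for $i=1,2$. Since every leading term in all four constants carries a factor $\tfrac{\rho_1}{\rho_0}$ (with the $A_j$ being $\sim 1$ and $A_1$ bounded once $\tfrac{\rho_1}{\rho_0}$ is small, exactly as in the closedness proof) and the remaining terms are $O(c/\rho_0)$, both row sums can be made strictly less than $1$ by taking $\tfrac{\rho_1}{\rho_0}$ small and then $c$ small — precisely the standing assumption recorded in Remark \ref{rem smallness cov ratio}. Then $(T_1,T_2)$ is a contraction on $S_1\times S_2$ in the norm $\max\{\|\cdot\|, d(\cdot,\cdot)\}$ (or any equivalent combination with the matrix $(a_{ij})$ having spectral radius $<1$), and Theorem 2.2 together with the Banach fixed point theorem yields a unique fixed point $(H^\ast, G^\ast)$ to which the iteration converges geometrically; $G^\ast$ is the optimal policy, proving Theorem 2.2 (the algorithm convergence claim).

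I expect the main obstacle to be bookkeeping in the $T_2$ estimate \eqref{eq contraction 2}: the right-hand side of \eqref{eq G simplified} has four nested $G^{-1}$-dependent integration limits and an $H$-dependent integrand, so isolating the dependence on $(H,G)$ cleanly — and being careful that each "vary one factor at a time" step uses the correct sup-bounds from Lemma \ref{lem S1 S2} and does not secretly reintroduce an $O(1)$ (rather than $O(\rho_1/\rho_0)$ or $O(c/\rho_0)$) term — requires some care. The analogous $T_1$ estimate is more routine because \eqref{eq H simplified} has a simpler structure. One should also double-check that the $\tfrac{dG^{-1}}{dx}$ factors, which enter when differentiating, stay within $[A_2^{-1}\tfrac{\rho_1}{\rho_0}, A_3^{-1}\tfrac{\rho_1}{\rho_0}]$ throughout, so that the Lipschitz estimate for $G^{-1}$ can be applied uniformly.
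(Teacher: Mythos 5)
Your proposal follows essentially the same route as the paper: subtract the two images, separate the difference into terms where $H$ varies in the integrand and terms where the integration limits $G^{-1}$ vary, control the former via $\|H_1-H_2\|$ and $\sup|yf(y)|$ and the latter via the Lipschitz bound on $G^{-1}$ from Lemma \ref{lem S1 S2}, observe that the intervals of length $c/\rho_1$ contribute only $\mathcal{O}(c/\rho_0)$, and close the row sums for small $\rho_1/\rho_0$ and small $c$ before invoking the fixed-point theorem. The only caveat is your claim that \emph{every} leading term carries a factor $\rho_1/\rho_0$: in the paper's own bookkeeping the constant $a_{22}$ contains the $\mathcal{O}(1)$ contribution $\tfrac{2}{\sqrt{2\pi e}A_2^2}\approx 0.48$, and the row sum $a_{21}+a_{22}<1$ is saved by this number being less than $1$ rather than by its vanishing as $\rho_1/\rho_0\to 0$.
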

\begin{proof}
    We first prove \eqref{eq contraction 1}.
    \begin{align*}
        & T_1(H_1, G_1)(x) - T_1(H_2, G_2)(x) \\
        = & \rho_1 x \int_{G^{-1}_1(x)}^{G^{-1}_2(x)} f(y) dy - \rho_1 x \int_{-G^{-1}_1(x)}^{-G^{-1}_2(x)} f(y) dy \\
        + & \left( \int_{G^{-1}_1(x)}^{0} - \int_{0}^{-G^{-1}_1(x)} \right) (H_1(y) - H_2(y)) f(y) dy \\
        + & \left( \int_{G^{-1}_2(x)}^{G^{-1}_1(x)} - \int_{-G^{-1}_2(x)}^{-G^{-1}_1(x)} \right) H_2(y) f(y) dy + \mathcal{O}(\frac{c}{\rho_0}).
    \end{align*}
    Denote I, II, III to be the three lines in the preceding expression. We now
    estimate them respectively. For I, we use
    \[
        |xf(G^{-1}(x))| \leq |xf(\frac{x}{\frac{\rho_0}{\rho_1} A_3})| + \mathcal{O}(\frac{c}{\rho_0}) \leq \frac{A_3}{\frac{\rho_0}{\rho_1} \sqrt{2\pi e}} + \mathcal{O}(\frac{c}{\rho_0})
    \]
    and lemma \ref{lem S1 S2}. They yield
    \[
        I \leq 2\rho_1 \frac{\rho_1^2}{\rho_0^2} \frac{A_3}{\sqrt{2\pi e} A_2^2} d(G_1, G_2) |x + \frac{c}{2\rho_1}|.
    \]
    For II, we use
    \[
        |H_1(x) - H_2(x)| \leq \rho_0 \|H_1 - H_2\| |x + \frac{c}{2\rho_1}|
    \]
    and lemma \ref{lem S1 S2}. We get
    \begin{align*}
        II & \leq 2\rho_0 \int_{G^{-1}(x)}^{0} \|H_1 - H_2\| |y + \frac{c}{2\rho_1}| f(y) dy \\
        & \leq 2 \rho_0 \|H_1 - H_2\| \sup |Df| |G^{-1}(x)| + \mathcal{O}(\frac{c}{\rho_0}) \\
        & \leq \frac{1}{\sqrt{2\pi e} A_2} \rho_1 \|H_1 - H_2\| |x + \frac{c}{2\rho_1}|.
    \end{align*}
    For III, we use the estimates for $G^{-1}_1(x) - G^{-1}_2(x)$ and lemma \ref{lem S1 S2}. We get
    \[
        III \leq \frac{2}{\sqrt{2\pi e}} \rho_1 \frac{A_1}{A_2^2} d(G_1, G_2) |x + \frac{c}{2\rho_1}| + \mathcal{O}(\frac{c}{\rho_0}).
    \]
    Collecting I, II, III and evaluating the $\|\cdot\|$, we get
    \[
        a_{11} \leq \frac{1}{\sqrt{2\pi e} A_2}\frac{\rho_1}{\rho_0}
    \]
    and 
    \[
        a_{12} \leq \frac{2}{\sqrt{2\pi e}} \frac{\rho_1^3}{\rho_0^3} \frac{A_3}{A_2^2} + \frac{2}{\sqrt{2\pi e}} \frac{\rho_1}{\rho_0} \frac{A_1}{A_2^2}.
    \]
    Numerically, $\sqrt{2\pi e} \sim 4$.
    Roughly speaking, suppose $A_2, A_3 \sim \frac{\rho_0}{\rho_1}$ and $A_1 \sim 1 + \frac{\rho_1}{\rho_0}$, we would have 
    \[
        a_{11} + a_{12} < 1
    \]
    if $\rho_0 \geq \rho_1$, $A_2 \geq 1$ and $A_3 / A_2$ is not too large. This
    concludes the proof for the $T_1$ part.

    At last, we prove (\ref{eq contraction 2}).
    We estimate the following 
    \begin{align*}
        & \frac{\rho_1}{\rho_0} \left| \frac{T_2(H_1, G_1)(x) - T_2(H_2, G_2)(x)}{x} \right| \\
        = & \frac{1}{2\rho_0} \frac{1}{x} \left( \int_{G^{-1}_1(-x)}^{G^{-1}_1(-x-\frac{c}{\rho_1})} - \int_{G^{-1}_1(x)}^{G^{-1}_1(x-\frac{c}{\rho_1})} \right) (H_1(y) - H_2(y)) f(y) dy \\
        + & \frac{1}{2\rho_1} \frac{1}{x} \left( \int_{G^{-1}_1(-x)}^{G^{-1}_2(-x)} - \int_{G^{-1}_1(-x-\frac{c}{\rho_1})}^{G^{-1}_2(-x-\frac{c}{\rho_1})} + \int_{G^{-1}_1(x-\frac{c}{\rho_1})}^{G^{-1}_2(x-\frac{c}{\rho_1})} - \int_{G^{-1}_1(x)}^{G^{-1}_2(x)} \right) H_2(y) f(y) dy \\
        + & \frac{\rho_1}{\rho_0} \left( \int_{G^{-1}_1(x)}^{G^{-1}_2(x)} - \int_{G^{-1}_1(x-\frac{c}{\rho_1})}^{G^{-1}_2(x-\frac{c}{\rho_1})} f(y) dy \right)
    \end{align*}
    Denote the three lines by I, II, III.
    We estimate I by
    \[
        |I| \leq \frac{1}{2\rho_0} \frac{1}{x} 2\int_{G^{-1}_1(-x)}^{G^{-1}_1(-x-\frac{c}{\rho_1})} \rho_0 \|H_1 - H_2\| \sup_y |yf(y)| dy \leq \frac{c}{\rho_1} \frac{\rho_1^2}{\rho_0^2}\frac{1}{\sqrt{2\pi} A_2^2} \|H_1 - H_2\|
    \]
    The coefficient of $\|H_1 - H_2\|$ serves as $a_{21}$:
    \[
        a_{21} \leq \frac{c}{\rho_1} \frac{\rho_1^2}{\rho_0^2}\frac{1}{\sqrt{2\pi} A_2^2}
    \]
    For II,
    \[
        |II| \leq \frac{1}{2\rho_0} \frac{1}{x} 4 \int_{G^{-1}_1(-x)}^{G^{-1}_2(-x)} \rho_0 A_1 \sup_y |yf(y)| dy \leq \frac{2}{\sqrt{2\pi e} A_2^2} d(G_1, G_2)
    \]
    For III,
    \[
        |III| \leq \frac{\rho_1}{\rho_0} 2 d(G_1, G_2) \frac{\rho_1}{\rho_0} \sup_x \frac{|x + \frac{c}{2\rho_1}|}{A_2^2} f(\frac{x}{A_3}) \leq \frac{2}{\sqrt{2\pi e}} \frac{\rho_1^2}{\rho_0^2} \frac{A_3}{A_2^2} d(G_1, G_2)
    \]
    Collecting the coefficients of $d(G_1, G_2)$ gives $a_{22}$:
    \[
        a_{22} \leq \frac{2}{\sqrt{2\pi e} A_2^2} + \frac{2}{\sqrt{2\pi e}} \frac{\rho_1^2}{\rho_0^2} \frac{A_3}{A_2^2}
    \]
    When $c \rightarrow 0$, $a_{21} \rightarrow 0$, and $A_2 \uparrow 1, A_3
    \downarrow 1$. Thus for small enough c, and under some condition on
    $\frac{\rho_1}{\rho_0}$, say $\rho_1 \leq \frac{1}{2}\rho_0$, we will have 
    \[
        a_{21} + a_{22} < 1
    \]
    as desired.
\end{proof}

\bibliographystyle{amsplain}
\bibliography{references}

\end{document}